\newtheorem{thm}{Theorem}[section]
\newtheorem{lm}[thm]{Lemma}
\newtheorem{cor}[thm]{Corollary}
\newtheorem{pro}[thm]{Proposition}
\theoremstyle{definition}
\newtheorem{df}[thm]{Definition}
\begin{document}

\title[On the semi-threading of knot diagrams with minimal overpasses]
{On the semi-threading of knot diagrams with minimal overpasses}

\author{Jae-Wook Chung}
\address{Department of Mathematics, Yeungnam University,
Kyongsan, Korea 712-749}
\email{jwchung@ynu.ac.kr}

\author{Seulgi Jeong}
\address{Department of Mathematics \\Kyonggi University
\\ Suwon, 443-760 Korea}
\email{seul@kyonggi.ac.kr}

\author{Dongseok Kim}
\address{Department of Mathematics \\Kyonggi University
\\ Suwon, 443-760 Korea}
\thanks{Corresponding author was supported by the Korea Research Foundation Grant funded by the Korean Government (MOEHRD, Basic Research Promotion Fund) (KRF-2008-331-C00035).}

\begin{abstract}{Given a knot diagram $D$, we construct a
semi-threading circle for it which can be an axis of $D$ as a
closed braid depending on knot diagrams. In particular, we
consider semi-threading circles for minimal diagrams of a knot
with respect to overpasses which give us some information related
to the braid index. By this notion, we show that, for every
nontrivial knot $K$, the braid index $b(K)$ of $K$ is not less
than the minimum number $l(K)$ of overpasses of diagrams. Moreover,
they are the same for a torus knot.}
\end{abstract}

\maketitle

\section{Introduction}

Throughout this paper, every knot is oriented and lies in the
3-dimensional sphere $S^3$. Also, all knots are isotopic
to polygonal or smooth knots, i.e., they are \emph{tame}. Therefore,
each knot has a diagram with finitely many crossings, hence,
has a finite number of overpasses. A diagram $D$ of a knot $K$
is a popular representative of the isotopy class of $K$ which is
also called the \emph{knot type} of $K$. $D$ is obtained from a regular
projection $P$ of $K$ by the following steps. First, we take a
sufficiently small neighborhood of each double point of $P$ so
that the intersection of the neighborhood and $P$ looks like an
`X-shape' on the plane. Second, we adjust the interior of each
neighborhood so that we have a knot $D$ which is isotopic to $K$
and regularly projected to $P$. In this sense, a knot diagram is
an `almost planar' knot, i.e., it lies in the plane except for a
small neighborhood of each double point of the regular projection.
It has been found various diagrams of knot types whose numbers
of crossings are minimal. Here, we prefer the number of overpasses
of a knot diagram and are interested in knot diagrams whose numbers
of overpasses are minimal. This gives us another point of view to
consider knot diagrams. Notice that the minimal number of overpasses
of a knot is the classical \emph{bridge number} and first studied
by Schubert in \cite{SCH}, where the effect of various operations
on knots (satellite, cabling, connected sum) on this number was
investigated. We refer to \cite{A} for further studies. The first author investigated the relationships
between the number of crossings and the number of overpasses of
a knot diagram. It turns out the number of overpasses is estimated
by that of crossings if the knot diagram has a minimal number of
crossings \cite{C-L}. On the other hand, we define a `semi-threading'
of an oriented knot diagram and mainly consider `minimal semi-threading'
based on our definition. From this construction, we know that the
braid index of a nontrivial knot is not less than the minimum number
of overpasses of its diagrams. This theorem suggests important
information about minimal knot with respect to overpasses.
In particular, the braid index of each torus knot is the same as
the minimum number of overpasses of its diagrams. That is,
the semi-threading circle is just a threading. For a `threading'
of a knot in detail, see \cite{MOR}.

\section{Minimal crossings and minimal overpasses of knot type}

Given a knot diagram $D$, the number of crossings or the crossing
number of $D$ is denoted by $c(D)$. For each knot $K$, we denote
$min\{c(D) \mid D$ \textit{is a diagram of} $K\}$ by $c(K)$. Note
that we may assume a knot diagram $D$ lies in the plane by
indicating `overcrossings' and `undercrossings'. A `crossing',
in this sense, of a knot diagram $D$ means a `signed double point'
of the regular projection of $D$. Hence, $c(D)$ is the number of
all double points of the regular projection of $D$. On the other
hand, we may regard a crossing of $D$ as the pair of two points
overcrossing and undercrossing in $D$ which are projected to the
same double point. That is, a crossing is considered as the pre-image
of a double point under the projection map.

\begin{pro} Let $D$ be a knot diagram. Then there are a unique
nonnegative integer $k$ and a finite sequence $s_1,f_1,s_2,f_2,
\dots,s_k,f_k$ of $2k$ points of $D$ which are neither overcrossings
nor undercrossings of $D$ such that
$$[s_1,f_1],[s_2,f_2],\dots,[s_{k-1}, f_{k-1}],[s_k,f_k]$$ and
$$[f_1,s_2],[f_2,s_3],\dots,[f_{k-1},s_k],[f_k,s_1]$$ are the
overpasses and the underpasses of $D$ with respect to the
sequence, respectively, where $[s_i, f_i]$, for each $i \in
\{1,\dots,k\}$, is the closed arc of $D$ from $s_i$ to $f_i$ which
contains at least one overcrossing but has no undercrossing;
similarly, $[f_i,s_{i+1}]$, for each $i \in \{1,\dots,k-1\}$, is the
closed arc of $D$ from $f_i$ to $s_{i+1}$ which contains at least
one undercrossing but has no overcrossing; also, the last one
$[f_k, s_1]$ is the closed arc of $D$ from $f_k$ to $s_1$ which
contains at least one undercrossing but has no overcrossing.
\end{pro}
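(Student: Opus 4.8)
The plan is to read the sequence off the cyclic order in which $D$ passes through its own crossings, and then to see that this cyclic order already forces $k$. First dispose of the degenerate case: if $c(D)=0$, take $k=0$ and the empty sequence, and note that no $k\ge 1$ can occur because every overpass must contain an overcrossing. So assume $c(D)\ge 1$. Fix a basepoint of $D$ that is not a crossing, traverse $D$ once along its orientation, and record the resulting cyclic sequence of crossings, labelling a crossing with $O$ when the traversal runs along its overcrossing arc and with $U$ when along its undercrossing arc. The one fact doing the work here is exactly the description of a crossing recalled just before the statement: a crossing is the preimage of a double point, that is, an unordered pair of one overcrossing point and one undercrossing point of $D$. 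Hence each crossing contributes precisely one $O$ and one $U$ to the cyclic sequence; in particular the sequence has at least one $O$ and at least one $U$, so it is nonconstant.

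Next I would group this nonconstant cyclic binary sequence into its maximal runs of equal labels. Such a word splits into an even number of maximal runs alternating between the two letters; let $k\ge 1$ be the number of $O$-runs, which equals the number of $U$-runs. For the $i$-th $O$-run in cyclic order, pick $s_i$ to be any non-crossing point of $D$ on the open sub-arc between the last crossing of the preceding $U$-run and the first crossing of the $i$-th $O$-run, and pick $f_i$ similarly on the open sub-arc between the last crossing of the $i$-th $O$-run and the first crossing of the following $U$-run; such points exist because two crossings consecutive along the traversal are distinct points of $D$, so the sub-arc strictly between them is nondegenerate and crossing-free. By construction the $2k$ points $s_1,f_1,\dots,s_k,f_k$ are not crossings, $[s_i,f_i]$ contains exactly the overcrossings of the $i$-th $O$-run (at least one, and no undercrossing), and $[f_i,s_{i+1}]$ contains exactly the undercrossings of the intervening $U$-run (at least one, and no overcrossing). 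So this sequence has all the required properties.

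For uniqueness of $k$, take any sequence $s_1,f_1,\dots,s_k,f_k$ as in the statement. Its $2k$ points cut $D$ into $2k$ arcs with pairwise disjoint interiors, alternately the overpasses $[s_i,f_i]$ and the underpasses $[f_i,s_{i+1}]$; since the cut points are not crossings, every crossing lies in the interior of exactly one of these arcs. Reading labels along $D$ in traversal order then exhibits the cyclic word as a concatenation of $2k$ nonempty blocks, each entirely $O$ (from an overpass) or entirely $U$ (from an underpass) and alternating in type. But in a cyclic binary word the only such concatenation is the maximal-run decomposition, whose block boundaries are exactly the letter switches; hence this blocking coincides with the one above and $k$ equals its number of $O$-runs, which depends only on $D$. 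The points $s_i,f_i$ are of course not unique, being free to slide within their gap arcs. I do not anticipate a genuine obstacle: the content reduces to the single observation that each crossing is traversed once over and once under, immediate from the given definition of a crossing, and the remainder is careful cyclic bookkeeping plus the $c(D)=0$ case.
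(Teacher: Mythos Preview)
Your argument is correct. The paper itself offers no proof of this proposition: it is stated and then immediately used, with the text that follows simply naming the sequence an \emph{over-underpass sequence} and defining $l(D)=k$. So there is no authorial proof to compare against; you have supplied the details the authors left implicit.

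Your approach is the natural one and exactly what the authors presumably had in mind: traverse $D$, record the cyclic $O/U$ word of over- and undercrossings, and take the maximal-run decomposition. The only substantive observation is that each crossing contributes one $O$ and one $U$, which you correctly trace back to the paper's description of a crossing as the preimage of a double point. The uniqueness of $k$ then follows because any over-underpass sequence induces an alternating block decomposition of the cyclic $O/U$ word into nonempty constant blocks, and such a decomposition is unique. Your handling of the degenerate case $c(D)=0$ with $k=0$ is also fine and consistent with how the paper later treats the trivial-knot case. Nothing is missing.
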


In Proposition 2.1, such a sequence $s_1,f_1,s_2,f_2,\dots,s_k,f_k$
is called an \textit{over-underpass sequence} of $D$. Since any
over-underpass sequence of $D$ consists of $2k$ points, the number
of overpasses (or underpasses) with respect to any over-underpass
sequence of $D$ is $k$. Hence, we can define the number of overpasses
(or underpasses) of the knot diagram $D$ as $k$, and denote it by
$l(D)$, called the length of over-underpass sequences or the bridge
number of $D$. Also, for each knot $K$, we denote
$min\{l(D) \mid D$ \textit{is a diagram of} $K\}$ by $l(K)$.

Notice that (1) $c(D)$ and $l(D)$ are plane isotopy
invariants of knot diagrams, i.e., if $D_1$ and $D_2$ are plane
isotopic knot diagrams, then $c(D_1)=c(D_2)$ and
$l(D_1)=l(D_2)$; (2) $c(K)$ and $l(K)$ are isotopy
invariants of knots, i.e., if $K_1$ and $K_2$ are isotopic knots,
then $c(K_1)=c(K_2)$ and $l(K_1)=l(K_2)$.

\begin{cor} If $l(D)\leq 1$, then $D$ is a diagram of a
trivial knot. Therefore, a knot $K$ is trivial if and only if $K$
has a diagram $D$ with $l(D)\leq 1$.
\end{cor}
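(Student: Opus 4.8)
The plan is to establish the implication ``$l(D)\le 1\Rightarrow D$ is a diagram of the trivial knot'' by splitting into the cases $l(D)=0$ and $l(D)=1$, and then to read off the stated equivalence. For $l(D)=0$, Proposition 2.1 forces $k=0$: there are no overpasses, hence no overcrossings; and since every crossing of a knot diagram has exactly one over-strand, a diagram with no overcrossings has no crossings at all. Thus $D$ is an embedded simple closed curve in the plane, which bounds a disk, so $D$ is a diagram of the unknot.

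It remains to treat $l(D)=1$. By Proposition 2.1 there is an over-underpass sequence $s_1,f_1$, so $D=\alpha\cup\beta$ with $\alpha=[s_1,f_1]$ the unique overpass, $\beta=[f_1,s_1]$ the unique underpass, and $\alpha\cap\beta=\{s_1,f_1\}$. First I would note that $\alpha$ has no self-crossing --- at such a crossing one strand of $\alpha$ would be an undercrossing, which $\alpha$ does not contain --- and likewise $\beta$ has no self-crossing; consequently every crossing of $D$ is a crossing of $\alpha$ over $\beta$. Hence $D$ can be realized in $\R^3$ with $\beta$ an embedded arc in the plane $\R^2\times\{0\}$ and the interior of $\alpha$ lying strictly above that plane, meeting it only at $s_1$ and $f_1$. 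The key step is an ambient isotopy, fixing $\beta$ and supported in a neighbourhood of $\alpha$ that meets $\beta$ only near $s_1,f_1$, which raises the interior of $\alpha$ to a monotone arch (up from $s_1$, across at a large height, down to $f_1$); since $\alpha$ is only pushed upward, no new intersection with $\beta$ appears.

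After this move, a horizontal plane $Q$ at intermediate height splits $D$ into one arc above $Q$ (the top of the arch, with a single maximum in height, hence a trivial boundary-parallel arc in the upper ball) and one arc below $Q$ ($\beta$ together with two short vertical legs, which flattens onto $Q$ and is likewise trivial in the lower ball). I would finish by isotoping the lower arc into $Q$ and then isotoping the still-boundary-parallel upper arc into $Q$, leaving an embedded circle in the plane $Q$; this bounds a disk, so $D$ is a diagram of the unknot. For the equivalence: if $K$ is trivial then its round diagram has $l=0\le 1$, while if $K$ has some diagram $D$ with $l(D)\le 1$ then the above shows $D$, hence $K$, is trivial. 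An alternative to the geometric argument is an induction on $c(D)$ that repeatedly removes a crossing by a Reidemeister move exploiting the crossing-free initial segment of $\alpha$, keeping a single overpass throughout. I expect the main obstacle to be the middle step: realizing $D$ so that the overpass lifts to a trivial arch without ever meeting the underpass, and confirming that the resulting $1$-bridge knot bounds a disk; in the combinatorial alternative the same difficulty reappears as the bookkeeping ensuring a crossing can always be removed.
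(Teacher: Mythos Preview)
The paper does not actually prove this corollary: it is stated immediately after Proposition~2.1 with no argument, and the text then moves on to discuss why not every diagram of the unknot has $l(D)\le 1$. The authors evidently regard it as the classical fact (Schubert~\cite{SCH}) that a knot of bridge number at most one is trivial, and leave it at that.

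Your proposal is correct and supplies exactly the content the paper omits. The case $l(D)=0$ is handled cleanly. For $l(D)=1$, your key observations---that neither $\alpha$ nor $\beta$ can self-cross, so every crossing is $\alpha$ over $\beta$, hence $\beta$ embeds in the plane and $\alpha$ can be lifted and straightened to a single arch---are the standard way to see that a $1$-bridge knot is the unknot. The step you flag as the potential obstacle (lifting $\alpha$ to a trivial arch without meeting $\beta$) is fine once you have placed the interior of $\alpha$ strictly in the open upper half-space: any two properly embedded arcs in a $3$-ball with the same boundary are isotopic rel boundary, so the straightening goes through. Your combinatorial alternative via Reidemeister moves would also work. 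In short, there is nothing to compare against in the paper; your write-up is more detailed than anything the authors provide.
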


Obviously, the converse is not true. A diagram of a double twisted
circle can be an example. Similarly, for any positive integer $k$,
there is a diagram of a trivial knot whose number of overpasses is
greater than $k$. On the other hand, given a knot diagram $D$ with
at least one crossing, we can add crossings to $D$ as many as we
want without changing the knot type and the number of overpasses
of $D$. Take a sufficiently small arc of $D$ from $s_1$ to a point
between $s_1$ and the first overcrossing of $D$ from $s_1$ and
twist it alternatingly so that the number of overpasses of $D$ is
not changed. Or, we may modify the interior of a sufficiently
small neighborhood of a crossing of $D$. Hence, we have the
following corollary.

\begin{cor} If $D$ is a diagram of a knot $K$ such that
$c(D)\geq 1$, then for every positive integer $n$, there is a
diagram $D'$ of $K$ such that $l(D')=l(D)$ and
$c(D') \geq c(D)+n$.
\end{cor}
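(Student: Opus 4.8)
The plan is to enlarge $c(D)$ by repeatedly performing a \emph{local} Reidemeister~II move in which an over‑going strand is pushed across the strand lying beneath it; each such move adds two crossings, but, because it inserts only new overcrossings into an overpass and only new undercrossings into an underpass, it leaves an over‑underpass sequence valid, hence leaves $l(D)$ unchanged.

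First I would fix an over‑underpass sequence $s_1,f_1,\dots,s_k,f_k$ of $D$ as provided by Proposition~2.1. Since $c(D)\geq 1$, the diagram has at least one overcrossing, and by Proposition~2.1 an overcrossing cannot lie on any underpass, so it lies on some overpass; in particular $k=l(D)\geq 1$. Pick an overcrossing $x$ lying on the overpass $[s_1,f_1]$. Near $x$ the diagram $D$ consists of its over‑strand, which is a sub‑arc $\alpha$ of $[s_1,f_1]$, together with its under‑strand, which is a sub‑arc $\beta$ of the underpass of the sequence on which $x$ is an undercrossing. Choose a small disk $B$ centered at $x$ whose intersection with $D$ is exactly the two arcs forming the crossing $x$ and which contains none of the $2k$ points $s_i,f_i$; such a $B$ exists since those points are finitely many and distinct from $x$.

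Next comes the modification. Inside $B$, take a tiny sub‑arc $\alpha_0$ of $\alpha$ lying just to one side of $x$ and meeting no crossing, and deform $\alpha_0$ so that it develops a short tongue lying over $\beta$; this tongue meets $\beta$ in two new crossings, at each of which $\alpha_0$ passes over $\beta$. This is a single Reidemeister~II move, hence an ambient isotopy, so the resulting diagram $D'$ is again a diagram of $K$. By construction the two new crossings are overcrossings that lie on the arc from $s_1$ to $f_1$, so that arc still contains an overcrossing and still contains no undercrossing; the two matching undercrossings lie on the underpass of $x$, which still contains an undercrossing and no overcrossing; and every other arc of the sequence, as well as every point $s_i,f_i$, is untouched. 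Hence $s_1,f_1,\dots,s_k,f_k$ is an over‑underpass sequence of $D'$ as well, so $l(D')=k=l(D)$, while $c(D')=c(D)+2$. Since $c(D')\geq 1$ and $l(D')\geq 1$, the hypotheses are preserved and I can repeat the step; applying it $n$ times yields a diagram $D'$ of $K$ with $l(D')=l(D)$ and $c(D')=c(D)+2n\geq c(D)+n$.

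The only point that genuinely needs care — and the reason the move must be a Reidemeister~II move rather than a Reidemeister~I kink — is that an R1 curl introduces a self‑crossing that is at once an overcrossing and an undercrossing, so the arc carrying it would fail the defining property of its overpass (or underpass); pushing an entire strand over the one it meets avoids this, since at both new crossings that strand is unambiguously the overstrand. The remaining verifications (that a small enough $B$ can be taken disjoint from the finitely many distinguished points and from the rest of $D$, and the bookkeeping of the induction) are routine.
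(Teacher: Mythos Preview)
Your argument is correct and matches the paper's own justification, given informally in the paragraph preceding the corollary: the paper suggests either twisting a short arc near $s_1$ ``alternatingly'' or ``modify[ing] the interior of a sufficiently small neighborhood of a crossing of $D$'', and your Reidemeister~II finger move of the over-strand across the under-strand is precisely a clean implementation of the second option. Your caution about R1 kinks is well taken for a curl placed in the interior of an overpass; the paper's first option instead places the twist at the boundary point $s_1$, where one may slide $s_1$ to absorb the new over/under pair.
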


\begin{lm} $l(D)\leq c(D)$ for any knot diagram $D$. The
equality holds if $D$ is an alternating knot diagram. Furthermore,
$l(K)\leq c(K)$ for any knot $K$.
\end{lm}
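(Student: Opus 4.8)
The plan is to argue by directly counting overcrossings, using the cyclic decomposition of $D$ furnished by Proposition 2.1. Fix an over-underpass sequence $s_1,f_1,\dots,s_k,f_k$ of $D$, so that $k=l(D)$ and the overpasses $[s_1,f_1],\dots,[s_k,f_k]$ together with the underpasses $[f_1,s_2],\dots,[f_{k-1},s_k],[f_k,s_1]$ cover the circle $D$ with pairwise disjoint interiors, their endpoints being neither overcrossings nor undercrossings. Regard each crossing of $D$ as the pair consisting of its overcrossing point and its undercrossing point on $D$; then $D$ carries exactly $c(D)$ overcrossing points. Since each underpass contains no overcrossing, every overcrossing point lies in some overpass, and since the overpasses have disjoint interiors it lies in exactly one. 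As each overpass contains at least one overcrossing, summing the number of overcrossings over the $k$ overpasses yields $k\le c(D)$, that is, $l(D)\le c(D)$.

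For the equality in the alternating case, I would use that in an alternating diagram the crossing points met while traversing $D$ once alternate strictly between overcrossings and undercrossings; in particular, between any two overcrossing points that are consecutive along $D$ there lies an undercrossing point. Since an overpass $[s_i,f_i]$ contains no undercrossing by definition, it can therefore contain at most one overcrossing point. Combined with the previous paragraph, where each overpass was shown to contain at least one, it follows that every overpass of an alternating diagram contains exactly one overcrossing, and hence $c(D)=k=l(D)$.

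To obtain $l(K)\le c(K)$ for a knot $K$, I would choose a diagram $D$ of $K$ realizing the crossing number, so $c(D)=c(K)$, and combine the inequality just established with the definition of $l(K)$ as a minimum over all diagrams of $K$: $l(K)\le l(D)\le c(D)=c(K)$.

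The only step that needs genuine care is the alternating case: one must invoke that \emph{alternating} means precisely that the over/under pattern alternates as one travels along the knot, so that an arc of $D$ meeting no undercrossing is wedged strictly between two consecutive overcrossings and hence meets at most one of them. Everything else is routine bookkeeping with the partition of $D$ coming from Proposition 2.1.
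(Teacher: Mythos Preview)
Your argument is correct. The paper actually states this lemma without proof (the remark following it concerns $\epsilon$-neighborhoods and is unrelated), so there is nothing to compare against; your write-up supplies precisely the elementary counting argument one would expect. The three steps---each overpass contributes at least one overcrossing, giving $l(D)\le c(D)$; in an alternating diagram each overpass contributes at most one, giving equality; and minimizing over diagrams yields $l(K)\le c(K)$---are all sound as you have them.
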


Remark that, since the number of double points of $D$ is finite
and we have an $\epsilon$-neighborhood of $D$ for a sufficiently
small $\epsilon$ which is the regular projection of a knotted
solid torus whose axis is the knot diagram $D$, we can always take
such an $\epsilon$ and such $\epsilon$-neighborhoods as above.

The following theorem gives us a relationship between the numbers of
crossings and overpasses.

\begin{thm}\cite{C-L}. If $D$ is a minimal diagram of a nontrivial
knot $K$ with respect to crossings, then
$1+\sqrt{1+c(D)}\leq l(D)\leq c(D)$.
\end{thm}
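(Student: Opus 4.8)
The plan is as follows. The right-hand inequality $l(D)\le c(D)$ is just the first assertion of Lemma~2.4, so only the left-hand bound needs work. Since $D$ is a diagram of a nontrivial knot, $l(D)\ge 2$ by Corollary~2.2, so squaring is reversible and $1+\sqrt{1+c(D)}\le l(D)$ is equivalent to the clean estimate $c(D)\le l(D)^{2}-2l(D)$. Thus the whole problem is to show that a minimal-crossing diagram of a nontrivial knot cannot have too few overpasses, and the minimality of $D$ is essential here, since by Corollary~2.3 no such bound survives for arbitrary diagrams.

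I would fix an over-underpass sequence $s_{1},f_{1},\dots,s_{k},f_{k}$, set $l:=k=l(D)$, and write $O_{i}=[s_{i},f_{i}]$, $U_{i}=[f_{i},s_{i+1}]$ (indices mod $l$) for the overpasses and underpasses. Attach to each crossing of $D$ the label $(i,j)$ for which the over-strand lies on $O_{i}$ and the under-strand lies on $U_{j}$, and let $a_{ij}$ count the crossings carrying that label. Since the over-point of a crossing lies on a unique overpass and its under-point on a unique underpass, every crossing is counted exactly once, so $c(D)=\sum_{i,j}a_{ij}$. I would then derive the estimate from two facts about a minimal diagram: (A) $a_{ij}\le 1$ for all $i,j$, i.e.\ an overpass passes over any given underpass at most once; and (B) $a_{ii}=0$ and $a_{i,i-1}=0$ for every $i$, i.e.\ an overpass never passes over the underpass $U_{i}$ immediately following it nor the underpass $U_{i-1}$ immediately preceding it (these share with $O_{i}$ the non-crossing point $f_{i}$, resp.\ $s_{i}$). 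Given (A) and (B), the $2l$ distinct labels $(i,i)$ and $(i,i-1)$ contribute $0$ while each of the remaining $l^{2}-2l$ labels contributes at most $1$, so $c(D)=\sum_{i,j}a_{ij}\le l^{2}-2l$, and hence $1+\sqrt{1+c(D)}\le 1+\sqrt{(l-1)^{2}}=l$.

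Both (A) and (B) I would prove by an innermost-disk argument exploiting that an overpass lies over everything and an underpass under everything; note first that $O_{i}$ and $U_{j}$ are embedded plane arcs, since a self-crossing of either would have to be an overcrossing and an undercrossing at once. For (A): from two points of $O_{i}\cap U_{j}$, consider the disks bounded by $\alpha\cup\beta$ with $\alpha$ a subarc of $O_{i}$, $\beta$ a subarc of $U_{j}$, and both corners in $O_{i}\cap U_{j}$; an innermost such disk $\Delta$ has interior disjoint from $O_{i}\cup U_{j}$, and at both corners $\alpha$ lies over $\beta$, so $\Delta$ is a Reidemeister~II bigon provided the rest of $D$ avoids its interior---and performing that move lowers $c(D)$, contradicting minimality. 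For (B) one runs the same argument with $\Delta$ the ``teardrop'' based at the shared non-crossing point $f_{i}$ (or $s_{i}$), which, once its interior is cleared, is a Reidemeister~I kink, again contradicting minimality.

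The main obstacle is exactly the phrase ``provided the rest of $D$ avoids its interior'': in general other strands cross the overpass subarc $\alpha$ and thread through $\Delta$. I would clear $\Delta$ first: a strand entering $\Delta$ must leave it, and each time it meets $\alpha$ it passes underneath; an innermost such intrusion cuts off a strictly smaller empty disk that is either a Reidemeister~II bigon (whose removal already contradicts minimality) or a Reidemeister~III triangle with the overpass on top (across which an RIII move slides the overpass, eliminating one intrusion while preserving $c(D)$). Iterating empties $\Delta$, so a minimal diagram admits no such intrusion, and (A) and (B)---hence the theorem---follow. Keeping the corners of the successive disks at honest crossings of $D$ throughout the recursion, and, for (B), arranging that no intrusion can enter $\Delta$ across the underpass side near $f_{i}$ (where an RIII move is not available), is the delicate but, I expect, routine part of the bookkeeping, and is where the argument is most likely to need extra care.
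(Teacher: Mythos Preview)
The paper does not actually prove Theorem~2.5: it is quoted from \cite{C-L} (Chung--Lin, \emph{Math.\ Proc.\ Camb.\ Phil.\ Soc.}~\textbf{137} (2004), 617--632) with no argument supplied, so there is no ``paper's own proof'' to compare your attempt against. What can be said is that your reduction to the inequality $c(D)\le l(D)^{2}-2l(D)$ via the incidence matrix $(a_{ij})$, together with the two structural claims (A)~$a_{ij}\le 1$ and (B)~$a_{ii}=a_{i,i-1}=0$, is exactly the skeleton of the Chung--Lin argument; you have reconstructed the intended proof rather than found an alternative.

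Your sketch is sound at the level of strategy, and you have correctly located the only real difficulty: clearing the bigon (for~(A)) or the teardrop (for~(B)) so that an honest RII or RI move becomes available. One point to tighten in the teardrop case: an intruding arc $\gamma$ running from the $O_i$-side to the $U_i$-side of $\partial\Delta$ and cutting off the smooth corner $f_i$ (rather than the crossing corner $p$) gives a bigon whose two corners have \emph{opposite} over/under data for $\gamma$, so neither RII nor RIII applies there directly. The remedy is not local at $f_i$ but global in $\Delta$: once intruder--intruder and intruder--same-side bigons are removed by RII, the remaining $O_i$-to-$U_i$ intruders are nested, and the \emph{outermost} one bounds an empty RIII triangle at the crossing corner $p$; sliding it out there reduces the count without ever touching the $f_i$-corner. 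Iterating empties $\Delta$. This is the ``extra care'' you anticipated, and with it your argument goes through; the full details are precisely the content of \cite{C-L}.
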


Notice that a knot $K$ need not hold $1+\sqrt{1+c(D)}\leq l(D)$.
The closure of the braid $b_1^5$, where $b_1$ is the generator of
the standard group presentation of the braid group $B_2$, can be
shown as an example for it by the Theorem 3.6 in the next section.
Also, remark that the crossing number of a knot diagram with a
minimal number of overpasses can be arbitrarily large as shown at
Corollary 2.3.

\section{Semi-threading knot diagram and braid index}

In this section, we briefly introduce a definition of threading
of a knot diagram. For details, see \cite{MOR}. Also, we define a
`semi-threading' of a knot diagram. In particular, we consider the
semi-threading of minimal diagrams of a knot with respect to overpasses.

First, let us introduce Alexander braiding theorem and Markov theorem
for braids shortly.

(1) Alexander Braiding Theorem: Every link is the closure of a braid,
that is, a closed braid.

This theorem is published by J.W. Alexander in 1923 and allows one to
study knots and links using the theory of braids \cite{Ale}. The minimum
number of braid strands used in the closure is called the
\textit{braid index} of the link. Note that the closures of different
braids need not be distinct. An answer for this question is the
following theorem.

(2) Markov Theorem: The closures of two braids are isotopic if and
only if one braid can be obtained from the other by a finite sequence
of \textit{Markov moves}, which are sometimes called conjugations and
stabilizations, respectively \cite{Mk}.

\begin{df} \cite{MOR}. Let $K$ be a knot, $T$ a trivial knot, and
$K \sqcup T$ an oriented link in $S^3$ whose components are only
$K$ and $T$ and the linking number $lk(K,T)$ is positive.
The link $K \sqcup T$ is said to be \textit{braided} if there are
isotopic diagrams $K'$, $T'$, $K' \sqcup T'$ of $K$, $T$,
$K \sqcup T$, respectively, such that $K'$ is a closed braid, i.e,
a closure of a braid, and $T'$ is an axis of $K'$, which is called
a \textit{threading circle} of $K'$. The link diagram $K' \sqcup T'$
is called a \textit{threading} of $K'$.
\end{df}

By the definition of a threading circle of a knot diagram, we easily
know that, for each knot $K$, the braid index $b(K)$ is
the same as $min\{lk(D,L) \mid D$ \textit{is a diagram of K and L is a
threading circle of D}$\}$. Notice that $b(K)$ is an isotopy invariant
of knots.

Actually, a threading circle of a knot diagram is an axis of the knot
as a closed braid. We define a `weaker threading' as follows.

\begin{df} Let $D$ be a diagram of a knot. An oriented simple
closed curve $L$ on the plane is called a \textit{semi-threading
circle} of $D$ if $L$ crosses all overpasses of $D$ transversely
exactly once so that the linking number $lk(D,L)$ of $D$ and $L$
is positive. We call such a link $D \sqcup L$
a \textit{semi-threading} of $D$.
\end{df}

Now we construct a semi-threading circle for a knot diagram which
might be an axis of $D$ as a closed braid depending on knot diagrams.
However, a threading circle of a knot diagram need not be a
semi-threading circle of it because a semi-threading circle requires
all the overpasses.

\begin{thm} A semi-threading circle $L$ of a knot diagram $D$ exists.
\end{thm}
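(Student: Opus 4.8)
The plan is to read off $L$ from the over--underpass sequence $s_1,f_1,\dots,s_k,f_k$ of $D$; assume $D$ has at least one crossing (otherwise it has no overpasses), so $k=l(D)\ge1$. Write $O_i=[s_i,f_i]$ for the overpasses and $\overline{O_i}$ for their plane projections. First I would record two elementary observations. Distinct overpasses have disjoint embedded projections: a common point of $\overline{O_i}$ and $\overline{O_j}$, or a self-intersection of a single $\overline{O_i}$, would be a double point of the projection of $D$ at which \emph{both} preimage arcs are overcrossings, which is impossible; in particular $s_1,\dots,s_k,f_1,\dots,f_k$ are $2k$ distinct points. Secondly, since $O_i$ carries no undercrossing, a $C^0$-small isotopy of $D$ supported near the plane raises each $O_i$ to height about $+1$, letting it meet the plane only near its endpoints $s_i,f_i$, while pushing each underpass below the plane, again meeting it only near the $s_j,f_j$; afterwards any planar simple closed curve that stays away from the transition arcs near the $s_j,f_j$ is disjoint from $D$ in $S^3$ and meets the projection of $D$ only along overpasses, which then pass over it, and underpasses, which pass under it.

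Next I would build $L$ as the boundary of a disk that traps exactly the initial points $s_1,\dots,s_k$. Since the $\overline{O_i}$ are $k$ pairwise disjoint arcs, $S^2\setminus(\overline{O_1}\cup\cdots\cup\overline{O_k})$ is connected, so there is a tree $\Theta\subset S^2$ whose vertex set contains $\{s_1,\dots,s_k\}$, whose edges join these up, and whose edge-interiors are disjoint from $\overline{O_1}\cup\cdots\cup\overline{O_k}$ (hence from every $f_j$, as $f_j\in\overline{O_j}$). Let $N$ be a thin planar regular neighborhood of $\Theta$, a disk, chosen small enough that it meets each $\overline{O_i}$ only in a short sub-arc issuing from $s_i$; this is possible because $s_i$ lies at positive distance from each $\overline{O_j}$ with $j\ne i$, and each edge of $\Theta$ lies at positive distance from every $\overline{O_m}$. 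Put $L:=\partial N$. Then $L$ is a simple closed curve, and since $\overline{O_i}$ runs from $s_i$, inside $N$, to $f_i$, outside $N$, and never re-enters $N$, it meets $L$ transversely in exactly one point. Thus $L$ crosses every overpass of $D$ transversely exactly once and, by the first paragraph, is disjoint from $D$ in $S^3$.

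Finally I would verify that, suitably oriented, $lk(D,L)>0$; this is the only delicate point, and it is precisely why $L$ is taken to be $\partial N$ rather than a curve running alongside $D$ --- for the latter the $k$ crossing signs would alternate and could cancel. Push $N$ slightly upward to a disk $\Delta$ with $\partial\Delta=L$, arranging in the isotopy above that each $O_i$ has risen above the height of $\Delta$ before it leaves $N$. Then $D$ meets $\Delta$ transversely in exactly $k$ points: on each overpass, near its start, where $\overline{O_i}$ crosses the level of $\Delta$ over the interior of $N$ while ascending. (The lowered underpasses never reach that level, and the remainder of each overpass is either above $\Delta$ or projects off $N$.) At each of these $k$ points the oriented tangent of $D$ runs from $s_i$ toward $f_i$, hence upward there, agreeing with the upward normal of $\Delta$; so the $k$ intersection signs all coincide, $lk(D,L)=\Delta\cdot D=\pm k$, and orienting $L$ as the boundary of the upward-oriented $\Delta$ gives $lk(D,L)=k>0$. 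The main obstacle I foresee is exactly this orientation bookkeeping --- ensuring the transition arcs of $D$ near the $s_j,f_j$ introduce no stray intersections with $\Delta$ and no further crossings of $L$ with the overpasses --- but it is bookkeeping rather than a real difficulty; the rest follows from the disjointness of the $\overline{O_i}$ and the connectedness of their complement.
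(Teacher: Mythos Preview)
Your argument is correct and takes a genuinely different route from the paper's. The paper first plane-isotopes $D$ so that the overpasses $[s_i,f_i]$ become vertical downward segments lined up left to right, then runs a piecewise-linear arc $L_0$ horizontally through points $x_i\in(s_i,f_i)$, closes $L_0$ up by a thin parallel translate $L_0+\epsilon$, and finally assigns crossing data by hand (the $[x_i',x_i'']$ pass under the overpasses, everything else passes over $D$); the specific curve so produced is named the \emph{minimal semi-threading circle} $M_D$ and is reused verbatim in Lemma~3.5 and Theorem~3.6. Your construction is more intrinsic: you never rearrange $D$, but instead use the observation that the overpass projections $\overline{O_i}$ are pairwise disjoint embedded arcs to join their initial points $s_i$ by a tree $\Theta$ in the complement and take $L=\partial N$ for a thin disk neighbourhood $N$ of $\Theta$; pushing $N$ upward to a Seifert disk $\Delta$ then yields $lk(D,L)=k$ directly, which is exactly the content of the paper's Corollary~3.4. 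What the paper's explicit construction buys is that later reuse --- the arc $L_0$ is extended to rays in the proof of Lemma~3.5 --- whereas your approach buys a cleaner, coordinate-free existence proof together with an immediate linking-number calculation. One small omission: you dismiss the crossing-free case in a parenthesis, but there the condition on overpasses is vacuous and one must still exhibit an $L$ with $lk(D,L)>0$; the paper does this by taking $D\sqcup L$ to be a positive Hopf link, and you should say a word to the same effect.
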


\begin{proof} If $D$ is a diagram of a trivial knot with no crossing,
we can draw an oriented simple closed curve $L$ on the plane so
that $D \sqcup L$ is a diagram of a positive Hopf link, a two
component link whose components are trivial and linking number is
1. Obviously, $L$ is a semi-threading circle of $D$.

Suppose $l(D)=k \geq 2$ and $s_1,f_1,s_2,f_2,\dots,s_k,f_k$ is an
over-underpass sequence of $D$. By a plane isotopy, we can arrange
all overpasses $[s_1,f_1],[s_2,f_2],\dots,[s_k,f_k]$ to be vertical
downward and from left to right, i.e., we have vertical overpasses
$[s_1,f_1],[s_2,f_2],\dots,[s_k,f_k]$ from left to right.

Now let us construct a simple closed curve $L$ on the plane to be
a semi-threading circle of $D$.
For each $i \in \{1,\dots,k\}$,
choose a point $x_i$ on $(s_i,f_i)$ which is not a crossing of $D$,
where $(s_i,f_i)=[s_i,f_i]-\{s_i,f_i\}$. Take a sufficiently small
circular closed neighborhood of $x_i$ in the plane so that the
intersection of the neighborhood and $D$ is a closed arc contained
in $(s_i,f_i)$ which has no crossing of $D$. Choose two points
$x_i'$ and $x_i''$ on the boundary of the neighborhood so that the
line segment $[x_i',x_i'']$ is perpendicular to $[s_i,f_i]$ and
passes through $x_i$ from left to right.
Draw the straight line $l_i$ from $x_i''$ to $x_{i+1}'$ for each
$i \in \{1,\dots,k-1\}$. If $l_i$ tangents $D$, by a sufficiently
small change, we can modify $l_i$ to a piecewise straight line which
intersects $D$ transversely and does not intersect the overpasses.
We also denote the piecewise straight line by $l_i$.
By the above construction, we have a piecewise straight line
$$L_0=[x_1',x_1''] \cup l_1 \cup \dots \cup [x_{k-1}',x_{k-1}''] \cup
l_{k-1} \cup [x_k',x_k'']$$ from $x_1'$ to $x_k''$ which intersects
$D$ transversely and intersects each overpass exactly once.
Let $\epsilon$ be a sufficiently small
positive real number such that

(1) $L_0+\epsilon$ is the parallel transition of $L_0$ by $\epsilon$
along the overpasses;

(2) $L_0+\epsilon$ does not pass through any crossings of $D$;

(3) $L_0+\epsilon$ intersects $D$ transversely;

(4) there is no crossing of $D$ between $L_0$ and $L_0+\epsilon$;

(5) each of the vertical line segments $[x_1',x_1'+\epsilon]$ and
$[x_k'',x_k''+\epsilon]$ does not intersect $D$. Now we get a
piecewise straight line $l_k'$ from $x_k''$ to $x_1'$ as
$$l_k'=[x_1',x_1'+\epsilon] \cup (L_0+\epsilon) \cup
[x_k'',x_k''+\epsilon]$$ and a simple closed curve $L_0
\cup l_k'$ on the plane.
Let $L=L_0 \cup l_k'$ and give the orientation which agrees with
from $x_1'$ to $x_k''$ clockwise.

As the next step, we give the crossing structures to the intersections
of $D$ and $L$ to make the linking number $lk(D,L)$ positive. For each
$i \in \{1,\dots,k\}$, $[x_i',x_i'']$ crosses $[s_i,f_i]$ below, and,
for each $i \in \{1,\dots,k-1\}$, $l_i$ crosses above the underpasses of $D$
which intersect $l_i$. In particular, $l_k'$ crosses above at any
intersection with $D$. Therefore, $L$ is a semi-threading circle of $D$
and the link diagram $D \sqcup L$ is a semi-threading of $D$.
\end{proof}

In the proof of Theorem 3.3, such a trivial knot diagram $L$ is unique
up to isotopy and called a \textit{minimal semi-threading circle} of $D$.
Also, we denote it by $M_D$. The semi-threading $D \sqcup M_D$ of $D$ is
called a \textit{minimal semi-threading} of $D$.
Remark that we can construct a semi-threading circle without changing
the knot diagram. In other words, the plane isotopy to make all overpasses
vertical downward is not necessary. It is obvious from the construction
of the semi-threading circle in the above proof.

We have already had well-known threadings devised by
Morton \cite{MOR}. They are also very good examples of threadings.
As an advantage of minimal semi-threading, it immediately shows the
relationship between the number of overpasses and the linking
number of threading, Corollary 3.4.
On the other hand, Morton's threading depends on unknotted simple closed
curves and numbers of crossings between knot and unknot diagrams, but
in his paper \cite{MOR}, Morton proved valuable theorems on his own threading.

In general, a threading circle of a knot diagram need not require all
overpasses of it. However, each minimal semi-threading of $D \sqcup M_D$
and the threading $D \sqcup Morton's$ for a knot diagram $D$ still require
all overpasses of $D$. For our purpose on this paper, the notion of minimal
semi-threading is necessary as a key fact. The following statement is an
immediate consequence of the minimal semi-threading of a knot diagram.

\begin{cor} If $D$ is a knot diagram with $l(D) \geq 1$, then
$l(D)=lk(D,M_D)$.
\end{cor}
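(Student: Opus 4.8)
The plan is to extract the value of $lk(D,M_D)$ directly from the explicit construction of the minimal semi-threading circle $M_D$ carried out in the proof of Theorem 3.3; no new geometry is needed, only a careful count of crossings and of their signs. I would begin from the standard fact that, for two disjoint oriented knots presented by a diagram, the linking number equals the sum of the local crossing signs taken over \emph{only} those crossings at which the first knot is the over-strand (this one-sided sum agrees with the analogous ``second over'' sum, and with half the signed count over all mutual crossings). Applying this with $D$ as the first component and $M_D$ as the second, it suffices to locate the crossings of the diagram $D\sqcup M_D$ at which $D$ passes over $M_D$ and to add up their signs.

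Next I would identify exactly those crossings from the construction. Writing $M_D=L_0\cup l_k'$ as in the proof of Theorem 3.3, with $L_0=[x_1',x_1'']\cup l_1\cup\cdots\cup l_{k-1}\cup[x_k',x_k'']$ and $k=l(D)$, the construction is arranged so that each segment $[x_i',x_i'']$ passes below $D$ --- it crosses the $i$-th overpass $[s_i,f_i]$ from below and meets $D$ nowhere else --- while every arc $l_i$ and all of $l_k'$ pass above $D$ at each of their intersections with $D$. Hence the crossings at which $D$ is the over-strand are precisely the $k$ crossings of $[x_1',x_1''],\dots,[x_k',x_k'']$ with the overpasses $[s_1,f_1],\dots,[s_k,f_k]$ --- exactly one per overpass, and no others; the remaining intersections of $M_D$ with $D$ (along the $l_i$ and along $l_k'$) all have $M_D$ over $D$ and so contribute nothing to the sum.

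It then remains to compute the sign at each of these $k$ crossings. In the normalized picture fixed at the start of the proof of Theorem 3.3 the overpass $[s_i,f_i]$ is a vertical segment oriented downward, and $[x_i',x_i'']$ is oriented from $x_i'$ to $x_i''$, i.e.\ from left to right (consistently with the chosen orientation of $L$, which runs ``from $x_1'$ to $x_k''$''). At such a crossing the over-strand points downward and the under-strand points to the right, so the under-direction is the over-direction rotated a quarter turn counterclockwise: a positive crossing, contributing $+1$. Summing over the $k$ overpasses gives $lk(D,M_D)=k=l(D)$. The case $k=1$, e.g.\ a twisted unknotted circle, is the same computation with $L_0=[x_1',x_1'']$ alone; and the hypothesis $l(D)\ge 1$ is genuinely needed, since when $l(D)=0$ the circle $D$ is crossingless and $M_D$ forms a positive Hopf link with it, so there $lk(D,M_D)=1\neq 0$.

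The only step that calls for real care is the sign bookkeeping in the last part: one must be sure that all $k$ relevant crossings contribute $+1$ rather than a mixture of $+1$ and $-1$. This is exactly what the normalizations built into the proof of Theorem 3.3 --- all overpasses drawn vertically downward, $L$ oriented so that $L_0$ runs left to right, and the segments $[x_i',x_i'']$ drawn to pass under $D$ --- were set up to guarantee, so once those conventions are invoked the corollary is immediate; it is really the same bookkeeping that makes the semi-threading circle have positive linking number in the first place.
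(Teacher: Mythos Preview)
Your argument is correct and is precisely the computation the paper has in mind: the paper gives no separate proof of this corollary, stating only that it is ``an immediate consequence of the minimal semi-threading of a knot diagram,'' and you have accurately unpacked that claim by reading off from the construction in Theorem~3.3 that the only crossings at which $D$ lies over $M_D$ are the $k$ perpendicular crossings $[x_i',x_i'']\cap[s_i,f_i]$, each positive under the chosen orientations. There is no alternative approach to compare against; your write-up simply makes explicit the crossing count and sign check that the authors leave to the reader.
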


The following lemma plays an important role between the minimal number
of overpasses and the braid index of a knot type. It may be regarded as
another version of Alexander Braiding Theorem.

\begin{lm} If $D$ is a knot diagram and a minimal semi-threading
$D \sqcup M_D$ is not braided, then there is a threading circle
$T$ of $D$ such that $lk(D,M_D) < lk(D,T)$.
\end{lm}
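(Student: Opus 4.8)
The plan is to apply Alexander's braiding procedure to $D$, but with the given circle $M_D$ serving as the braid axis, and then to transport $M_D$ back along the ambient isotopy that the procedure produces. First I would dispose of the trivial case: if $l(D)=0$ then $D$ has no crossing, $D\sqcup M_D$ is a positive Hopf link and hence braided, contrary to hypothesis; so we may assume $l(D)\ge 1$, and then Corollary~3.4 gives $lk(D,M_D)=l(D)$. I would also record two elementary facts: if a diagram $D'$ disjoint from $M_D$ is a closed braid about $M_D$ — that is, $D'$ winds monotonically about $M_D$, meeting each fibre of $S^3\setminus M_D\to S^1$ in $lk(D',M_D)$ positive points — then $D'\sqcup M_D$ is braided in the sense of Definition~3.1; and being braided is an isotopy invariant of links.

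Next I would run Alexander's algorithm (\cite{Ale}), which applies with any unknotted curve as axis, on $D$ relative to $M_D$; it terminates after finitely many moves in a diagram $D^{*}$ that is a closed braid about $M_D$. The moves are of two kinds: planar isotopies and Reidemeister moves avoiding $M_D$, which are isotopies of the link $D\sqcup M_D$ and leave $lk(D,M_D)$ fixed; and wrapping moves, each of which reroutes a sub-arc of the current diagram along which $D$ winds backward about $M_D$ so that it instead winds forward, picking up one extra turn about $M_D$. A wrapping move is an isotopy of the knot $D$ in $S^3$, so it preserves the knot type of $D$, but it is not an isotopy in $S^3\setminus M_D$; with the orientations fixed and the backward-running sub-arcs subdivided finely, it increases $lk(D,M_D)$ by one and strictly decreases the number of sub-arcs along which $D$ winds backward about $M_D$. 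Writing $m$ for the number of wrapping moves used: if $m=0$ then $D^{*}$ is reached from $D$ by isotopies of the link that fix $M_D$, whence $D\sqcup M_D$ is isotopic to the braided link $D^{*}\sqcup M_D$ and so is itself braided — contradicting the hypothesis. Therefore $m\ge 1$, and
$$lk(D^{*},M_D)=lk(D,M_D)+m=l(D)+m.$$

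To finish, I would use that every move above is an isotopy of $D$ in $S^3$, so the knot $D^{*}$ is isotopic to $D$. Fixing an ambient isotopy $G$ of $S^3$ with $G(D^{*})=D$ and setting $T:=G(M_D)$, the isotopy $G$ carries the link $D^{*}\sqcup M_D$ onto $D\sqcup T$; hence $D\sqcup T$ is braided with $T$ as its axis, $T$ is a trivial knot, and $lk(D,T)=lk(D^{*},M_D)=l(D)+m>0$, so $T$ is a threading circle of $D$ in the sense of Definition~3.1. Then
$$lk(D,M_D)=l(D)<l(D)+m=lk(D,T),$$
which is the asserted inequality.

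The step I expect to be the main obstacle is the accounting for a wrapping move: verifying that it is an ambient isotopy of the knot $D$ and yet changes the link $D\sqcup M_D$, raising its linking number by exactly one, and hence that the procedure must use at least one wrapping move unless $D\sqcup M_D$ was already braided. This is essentially Alexander's braiding theorem read relative to a fixed axis; granting it, the remaining ingredients — Corollary~3.4, the termination of the algorithm, and transporting the axis along an ambient isotopy — are routine.
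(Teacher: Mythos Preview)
Your argument is correct, but it proceeds quite differently from the paper's. The paper keeps the diagram $D$ fixed and modifies the circle: it extends $M_D$ to a curve $T_0$ through~$\infty$, observes that each underpass meets $T_0$ an odd number of times, and then performs crossing changes between $D$ and $T_0$ at the even-numbered crossings along each underpass meeting $T_0$ more than once; the resulting curve $T$ is a threading circle with $lk(D,T)=lk(D,M_D)+\sum_i n_i$, where $2n_i+1$ is the number of times the $i$-th bad underpass met $T_0$. You instead keep the axis $M_D$ fixed and modify $D$ via Alexander's algorithm, then transport $M_D$ back along an ambient isotopy carrying $D^{*}$ to $D$. The paper's construction is the more explicit one --- $T$ differs from $T_0$ only at specified crossings, and the excess linking number is read off directly from the underpass combinatorics --- and this explicitness is what the paper leans on in the proof of Theorem~3.6, where one wants the resulting closed braid to have no superfluous strands. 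Your route is more conceptual and makes transparent that the lemma is a relative form of Alexander's theorem. One small point to tighten: a wrapping move that converts a single negative intersection with the Seifert disc of $M_D$ into a positive one changes the linking number by $+2$ rather than $+1$; this does not affect the strict inequality you need, but the precise increment in your displayed formula should be adjusted.
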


\begin{proof} Suppose that $D$ is a knot diagram and $l(D)=k \geq 2$
and $s_1,f_1,s_2,f_2,\dots,s_k,f_k$ is an over-underpass sequence of $D$.
By Corollary 3.4, $lk(D,M_D)=k$.
Let us use $L_0$ in the proof of Theorem 3.3. That is,
$$L_0=[x_1',x_1''] \cup l_1 \cup \dots \cup [x_{k-1}',x_{k-1}''] \cup
l_{k-1} \cup [x_k',x_k''].$$ We draw rays $l_{L_0}$ and $r_{L_0}$ starting
from $x_1'$ and $x_k''$ leftward and rightward which extend the line segments
$\overline{x_1',x_1''}$ and $\overline{x_k',x_k''}$, respectively, and
cross above the underpasses of $D$ which intersect them.
Let $T_0=l_{L_0} \cup L_0 \cup r_{L_0}$. We can think of $T_0$ as
a simple closed curve passing through the infinity $\infty$ of $S^3$.
Hence, $D \sqcup T_0$ is isotopic to $D \sqcup M_D$. We call $T_0$ an
extended minimal semi-threading circle of $D$.

Our goal is to modify the extended minimal semi-threading circle $T_0$ to a
threading circle $T$ so that $D \sqcup T$ is braided. Note that, for each
$i \in \{1,\dots,k\}$, the $i$-th underpass $[f_i,s_{i+1}]$ or $[f_k,s_1]$
crosses below $T_0$ an odd times.
Notice that the extended semi-threading $D \sqcup T_0$ is really a
threading, that is, $D \sqcup T_0$ is braided if each underpass of $D$
crosses below $T_0$ exactly one time.

By hypothesis, $D \sqcup T_0$ is not braided, so $D$ has at least one
underpass which crosses $T_0$ below more than one time.

Suppose that $1 \leq m \leq k$ and $\{u_1,\dots,u_m\}$ is the set of
all underpasses of $D$ each of which crosses $T_0$ below more than one time.
For each $i \in \{1,\dots,m\}$, there is $n_i \in \mathbb N$ such that
$u_i$ crosses $T_0$ below exactly $2n_i+1$ times.
Give the order to $2n_i+1$ undercrossings on $u_i$ by $T_0$ which agree with
the orientation of $D$. We do crossing change for all the even numbered
undercrossings. That is, $n_i$ times of crossing change occur. Hence, exactly
$n_1+\cdots+n_m$ times of crossing change occur on $T_0$.
Let $T$ be this modified $T_0$. Then $D \sqcup T$ is braided with
$lk(D,T)=lk(D,M_D)+n_1+\cdots+n_m$. This proves the lemma.
\end{proof}

Notice that, in the proof of Lemma 3.4, $D \sqcup M_D$ and $D \sqcup T$
are surely not isotopic. Even though the threading $D \sqcup T$ is a
diagram of a braided link, it may have lots of unnecessary strands
as a closed braid. These unnecessary strands can be reduced by a suitable
sequence of Markov moves.

Now we show an inequality between the number of minimal overpasses and
the braid index for a nontrivial knot.
From now on, we consider only minimal knot diagrams with respect to overpasses.

\begin{thm} If $K$ is a nontrivial knot, then $l(K) \leq b(K)$.
\end{thm}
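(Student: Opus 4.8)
The plan is to derive the inequality from one geometric construction: from a presentation of $K$ as a closed braid on $b(K)$ strands I would build a diagram of $K$ with at most $b(K)$ overpasses, and then, since $l(K)$ is the minimum of $l(D)$ over all diagrams $D$ of $K$, the inequality $l(K)\le b(K)$ is immediate. By the Alexander Braiding Theorem, together with the reformulation of the braid index in terms of threadings recorded after Definition 3.1, I may fix a diagram $D$ of $K$ and a threading circle $L$ of $D$ with $lk(D,L)=b(K)=:n$, where $D$ is in closed-braid position about $L$. It will not do to keep this $D$: a closed $n$-braid diagram may have strictly more than $n$ overpasses. For instance the closure of $\sigma_1^3$ is an alternating trefoil diagram, so it has three overpasses by Lemma 2.4, while its braid axis links it only twice. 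Hence the diagram must be redrawn, and this is where the content of the theorem sits.

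The construction I would use is the following. View $K$ as a closed $n$-braid lying in a solid torus $V$, transverse to the meridian disks of $V$ and meeting each of them in $n$ points; since $V$ is the complement of the (unknotted) braid axis, it is an unknotted solid torus, and a core $C$ of $V$ is an unknot about which $K$ runs $n$ times. Now isotope so that $C$ is a round circle on which a fixed height function restricts to a Morse function with one maximum and one minimum, and so that the entire braid word $\sigma_{i_1}^{\pm1}\cdots\sigma_{i_c}^{\pm1}$ is carried out inside one short arc $\gamma$ of $C$ along which the height strictly increases, with the $c$ generators spread thinly along $\gamma$ and $V$ taken thin. Outside $\gamma$ the knot $K$ is $n$ nearly parallel copies of $C$, so the height function restricted to $K$ has exactly $n$ local maxima, one per pass over the top of $C$; inside $\gamma$ the strand interchanges occur in thin, nearly horizontal meridian disks while every strand keeps gaining height, so no new local maximum arises. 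Thus $K$ is positioned with exactly $n$ local maxima of the height function, and, by the identification of the minimal number of overpasses with the classical bridge number recalled in the Introduction, already $l(K)\le n=b(K)$. (This also reads off inside the framework of Section 3: the position above exhibits a diagram $D'$ of $K$ carrying a braided minimal semi-threading, whence $l(D')=lk(D',M_{D'})=n$ by Corollary 3.4.)

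The step I expect to be the real obstacle is the repositioning just described — concretely, the claim that the braiding can always be squeezed into a single ascending arc of $C$ without forcing any strand to pass over, then under, then over again during one revolution, which would create an extra overpass. This is exactly the classical fact that the bridge number of a knot does not exceed its braid index, and the cleanest route to it that I know is the Morse-theoretic form of $l(K)$: a closed $n$-braid can always be isotoped so that a fixed height function has at most $n$ maxima on it. Once that geometric input is in hand, everything else is a routine reading of Proposition 2.1 and Corollary 3.4.
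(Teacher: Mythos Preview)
Your argument is correct, but it runs in the opposite direction from the paper's. You start from a minimal closed-braid presentation on $n=b(K)$ strands and isotope it to a position with exactly $n$ local maxima of a height function, then invoke the identification of $l(K)$ with the classical bridge number stated in the Introduction; this is the standard proof that bridge number does not exceed braid index, and it needs none of the machinery of Section~3. The paper instead starts from a diagram $D$ with $l(D)=l(K)$, builds the minimal semi-threading circle $M_D$ of Theorem~3.3 and its extension $T_0$, and then---when $D\sqcup T_0$ is not already braided---first removes unnecessary crossings between the underpasses of $D$ and $T_0$ by plane isotopy, and then applies the crossing-change procedure of Lemma~3.5 to obtain a genuine threading $D'\sqcup T$ with $lk(D',T)\ge lk(D',T_0)=l(K)$, finally arguing that this threading realises $b(K)$. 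Your route is shorter and entirely classical; the paper's route is longer but is the point of the paper, since it exhibits Theorem~3.6 as an application of the semi-threading construction and of Lemma~3.5 rather than as an import from the bridge-number literature. Your parenthetical remark, that the braid position you build carries a braided minimal semi-threading with $lk(D',M_{D'})=n$, is the bridge between the two viewpoints.
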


\begin{proof} Let $K$ be a nontrivial knot, and let $D$ be a minimal diagram
of $K$ with respect to overpasses. Hence, $l(D)=l(K)$. Consider an
extended minimal semi-threading $D \sqcup T_0$ of $D$. If $D \sqcup T_0$ is
braided, then $b(K)=lk(D,T_0)$, so $l(K) \leq b(K)$. Suppose that $D \sqcup T_0$
is not braided. Using plane isotopy, we remove all unnecessary crossings
between the underpasses of $D$ and the extended minimal semi-threading circle
$T_0$ keeping on the structure of over-underpass sequence of $D$.
Assume that $D'$ is the modified diagram of $D$. By Corollary 3.4 and Lemma 3.5,
$l(K)=lk(D',T_0) \leq lk(D',T)$, where $T$ is such a threading circle of $D'$
modified from $T_0$ as in the proof of Lemma 3.5. Since all unnecessary crossings
between the underpasses and $T_0$ are removed, the threading $D' \sqcup T$ has
no unnecessary strands as closed braid. Therefore, $b(K)=lk(D',T)$. This proves
the theorem.
\end{proof}

In general, for a knot $K$, it is not true that $l(K)=b(K)$.
As an example, the figure 8-knot $K$ has $l(K)=2$ but $b(K)=3$.
Our main concern is what conditions for a knot $K$ make $l(K)=b(K)$.
One of such special classes of knot types is the torus knots.

\begin{thm} If $K$ is a torus knot, then $l(K)=b(K)$.
\end{thm}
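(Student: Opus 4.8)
The plan is to combine the already-established inequality $l(K) \le b(K)$ from Theorem 3.6 with a matching upper bound $b(K) \le l(K)$ coming from an explicit minimal overpass diagram of a torus knot. Recall the classical fact that for the $(p,q)$-torus knot $T_{p,q}$, with $2 \le p \le q$, one has $b(T_{p,q}) = \min\{p,q\} = p$; this is a theorem of Schubert, and we may invoke it. So it suffices to exhibit, for each torus knot $K = T_{p,q}$, a diagram $D$ with $l(D) = p$. Then $l(K) \le l(D) = p = b(K) \le l(K)$ by Theorem 3.6, and equality follows throughout.

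First I would take the standard closed-braid diagram of $T_{p,q}$ as the closure of $(\sigma_1\sigma_2\cdots\sigma_{p-1})^q$ in the braid group $B_p$, drawn in the usual way with the braid axis as a round circle. I claim that in this diagram (suitably redrawn on the plane with the axis sent to infinity, or equivalently read off the braid word directly) there is an over-underpass sequence of length exactly $p$. The point is that in a closed braid on $p$ strands with all positive crossings, one can choose the over-underpass structure so that each of the $p$ strands, near where it ``passes over'' in a controlled portion of the braiding, contributes one overpass: more carefully, one designs the heights so that each maximal overpass arc travels across the whole braiding region above everything, and the $p$ returning arcs are the underpasses. This gives $l(D) \le p$. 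Conversely $l(D) \ge l(K) = l(T_{p,q})$, and since the braid index is $p$ and by Theorem 3.6 $l(K) \le b(K) = p$, we in fact get $l(D) = p$ once we show $l(D) \le p$; so only the upper bound on the number of overpasses of this chosen diagram needs an argument.

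The cleanest route to $l(D) \le p$ is via the semi-threading machinery already built: the braid axis of the standard closed-braid diagram is a \emph{threading} circle $T$ with $lk(D,T) = p$ because $D$ is the closure of a $p$-braid. Now observe that this threading circle $T$ actually meets \emph{every} overpass of $D$ (for the standard diagram the axis is pierced once by each strand, and each overpass lies on a single strand), so $T$ is simultaneously a semi-threading circle in the sense of Definition 3.2. By Corollary 3.4 applied with $T$ playing the role of a semi-threading circle — more precisely, since any semi-threading circle meets each overpass exactly once and has positive linking number, the number of overpasses of $D$ equals $lk(D,T') $ for the \emph{minimal} semi-threading circle $M_D$, and $M_D$ is obtained from any given semi-threading circle by isotopy — we get $l(D) = lk(D,M_D)$. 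Since $T$ is a semi-threading circle crossing each overpass exactly once, $lk(D,T) = l(D)$ as well, hence $l(D) = p$. Therefore $l(K) \le p = b(K)$, and with Theorem 3.6 the two are equal.

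The main obstacle I expect is the bookkeeping in the second paragraph: verifying that the standard closed-braid picture of $T_{p,q}$ really does admit an over-underpass sequence of length exactly $p$ (not just $\le$ something larger), i.e., that the braid axis is genuinely a semi-threading circle hitting each overpass once. One must be careful that in the planar diagram obtained by pushing the axis to $\infty$, the arcs one wants to call ``overpasses'' each contain at least one overcrossing and no undercrossing, and that there are no extra overpasses hiding in the cyclic word $(\sigma_1\cdots\sigma_{p-1})^q$; choosing the diagram so that all the overcrossings a given strand is responsible for occur consecutively along that strand handles this, but it requires drawing the picture honestly. Once that combinatorial claim about the diagram is pinned down, everything else is a direct application of Corollary 3.4 and Theorem 3.6 together with Schubert's value $b(T_{p,q}) = p$.
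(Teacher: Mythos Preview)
There is a genuine gap in the opening sandwich argument: you invoke Theorem~3.6 to obtain $b(K)\le l(K)$, but Theorem~3.6 says the opposite, namely $l(K)\le b(K)$. With the correct direction, both the diagram bound $l(K)\le l(D)=p$ and Theorem~3.6 combined with $b(K)=p$ yield only $l(K)\le p$; you have no lower bound on $l(K)$, so equality does not follow. A related misattribution: Schubert's theorem computes the \emph{bridge number} $l(T_{p,q})=\min(p,q)$, not the braid index; that is exactly the missing lower bound you need, and once you cite it for $l$ rather than for $b$ the argument closes.

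There is also a problem with your chosen diagram. The closure of $(\sigma_1\cdots\sigma_{p-1})^q$ in $B_p$ has $l(D)=q$, not $p$: following the knot, the strand occupies position~$1$ (and hence overcrosses $p-1$ times) in exactly $q$ of the $pq$ block-positions, and between any two such occurrences it undercrosses, so there are $q$ overpasses. Consequently the braid axis, with $lk(D,\text{axis})=p$, cannot be a semi-threading circle of this diagram, and your Corollary~3.4 argument does not go through. The paper avoids this by passing to the isotopic $(q,p)$-presentation: the standard diagram $D'$ of the $(q,p)$-torus knot (closure of a $q$-braid) has $l(D')=p$, and the paper then asserts $l(K)=l(K')=l(D')=p$, i.e.\ that $D'$ realises the minimal number of overpasses---which is precisely Schubert's bridge-number computation. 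Combined with $b(K)=p$ this gives $l(K)=b(K)$.
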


\begin{proof} Suppose that $K$ is a $(p,q)$-torus knot, where
$p$ and $q$ are integers which are relatively prime.
Let $D$ be a standard diagram of $K$. Without loss of generality,
we may ssume that $0<p<q$. Then $c(D)=pq-q$ and $l(D)=q$.
In this case, $c(K)=c(D)$ by \cite{MUR} and $b(K)=b(D)=p$. By Theorem 3.6,
$l(K) \leq b(K)$. Since $K$ is a $(p,q)$-torus knot, $K$ is isotopic to
a $(q,p)$-torus knot $K'$. Let $D'$ be a standard diagram of $K'$. Then
$c(D')=pq-p$ and $l(D')=p$. Since the minimal number of crossings or overpasses
of knot diagrams is an isotopy invariant of knots, $c(K')=c(K)=c(D)<c(D')$
but $l(K)=l(K')=l(D')=p$. Therefore, $b(K)=l(K)=p$.
\end{proof}

This theorem says that a knot diagram with a minimal number of
overpasses gives a piece of information of the braid index of
the knot type. In other words, given a diagram $D$ of a knot $K$,
try to remove all unnecessary overpasses of $D$. When we can do so,
we will approach the braid index. In general, it is very hard to find
$l(K)$ and $b(K)$ for a knot $K$. Hence, it is very valuable that
we know more exact relationship between $l(K)$ and $b(K)$.

\section{Representing threading knot diagram as a closed braid}

In this section, we explain how to get an isotopic closed braid from
our threading of a knot diagram.

Consider a threading $D \sqcup T$ of a knot diagram $D$ with a threading
circle $T$. Let $s_1,f_1,s_2,f_2,\dots,s_k,f_k$ be an over-underpass sequence
of $D$. We may assume all overpasses of $D$ lie in
$R_+^3=\{(x,y,z)\in R^3 \mid z > 0\}$ and all underpasses of $D$ are
on the plane, i.e., the $xy$-plane of $R^3$.

By an isotopy, change the $i$-th overpass $[s_i,f_i]$ for
each $i \in \{1,\dots,k\}$ to the semi-circle $\widehat{s_i,f_i}$ in
$R_+^3$ from $s_i$ to $f_i$ whose projection is that of
$[s_i,f_i]$ and $T$ to a straight line $T'$ on the plane which
intersects perpendicularly to the projection of each semi-circle,
i.e., we can think of $T'$ as a simple closed curve passing through
the infinity $\infty$ of $S^3$.

Next, we modify the underpasses of $D$ to get the desired knot.
Now the plane contains only $T'$ and the underpasses of $D$.
Take a positive real number $\alpha$ and push down all underpasses of
$D$ by $\alpha$ so that they are on the plane $z=-\alpha$.

For each $i \in \{1,\dots,k-1\}$, let $u_i=[f_i,s_{i+1}]-\alpha$ and
$u_k=[f_k,s_1]-\alpha$, where $u_i$ is the parallel transition of the
$i$-th underpass by $-\alpha$,
and, for each $i \in \{1,\dots,k\}$, let $l_{s_i}$ and $l_{f_i}$ be
the vertical line segments from $s_i-\alpha$ to $s_i$ and from
$f_i$ to $f_i-\alpha$, respectively. Then $$(u_1 \cup u_2 \cup \dots
\cup u_k) \cup (l_{s_1} \cup l_{s_2} \cup \dots \cup l_{s_k})$$
represents a braid. To show this, fix the points $f_1-\alpha$, \dots,
$f_k-\alpha$ and lift up $l_{s_1}$, \dots, $l_{s_k}$. Then we can get the braid.
Now let $K'$ be $$(\widehat{s_i,f_i} \cup \dots \cup \widehat{s_k,f_k}) \cup
(l_{f_1} \cup \dots \cup l_{f_k}) \cup (u_1 \cup \dots \cup u_k) \cup
(l_{s_1} \cup \dots \cup l_{s_k}).$$ Then $D$ represents $K'$ and
$D \sqcup T$ also represents $K' \sqcup T'$ as desired.


\bigskip
\medskip
\bigskip

\bigskip

\end{document}